\def\BState{\State\hskip-\ALG@thistlm}
\newcommand{\ie}{\emph{i.e.}}
\newtheorem{theorem}{Theorem}
\newtheorem{lemma}{Lemma}
\newtheorem{definition}{Definition}
\begin{document}
%
\title{On the Impact of Gaslighting on Partially Observed Stochastic Control}
%
%
%

\author{Shutian~Liu
        and~Quanyan~Zhu
\thanks{The authors are with the Department of Electrical and Computer Engineering, Tandon School of Engineering,
        New York University, Brooklyn, NY, 11201 USA (e-mails: sl6803@nyu.edu; qz494@nyu.edu).}\vspace{-6mm}       
}

\pagestyle{empty}

\maketitle
\thispagestyle{empty}


\begin{abstract}
Recent years have witnessed a significant increase in cyber crimes and system failures caused by misinformation.
Many of these instances can be classified as gaslighting, which involves manipulating the perceptions of others through the use of information.
In this paper, we propose a dynamic game-theoretic framework built on a partially observed stochastic control system to study gaslighting.
The decision-maker (DM) in the game only accesses partial observations, and she determines the controls by constructing information states that capture her perceptions of the system.
The gaslighter in the game influences the system indirectly by designing the observations to manipulate the DM's perceptions and decisions.
We analyze the impact of the gaslighter's efforts using robustness analysis of the information states and optimal value to deviations in the observations.
A stealthiness constraint is introduced to restrict the power of the gaslighter and to help him stay undetected.
We consider approximate feedback Stackelberg equilibrium as the solution concept and estimate the cost of gaslighting.
 
\end{abstract}

\begin{keywords}
Gaslighting, partial observation, one-sided information game, robustness analysis
\end{keywords}

%

\section{Introduction}
\label{sec:intro}

Gaslighting is a form of socio-psychological abuse that manipulates people's beliefs and causes them to question themselves and their environment. It involves an epistemic battle between the gaslighter and the victim, where the gaslighter cannot directly influence the environment. This battle resembles the strategic management of observations and manipulation of perceptions among decision-makers (DMs). To successfully gaslight a DM, the gaslighter must stimulate variations in the DM's beliefs about the environment or perceptions of sensory information, which requires sophisticated control and design of information.


The increasing prevalence of information technologies, artificial intelligence (AI), and social media has made gaslighting inexpensive and convenient to conduct.  With networked cyber systems, attackers can leverage their ability to gather information from multiple sources in the system to generate disinformation and distribute it to users, gradually altering their perceptions and operational actions.  Our society is surrounded by various types of misinformation, and without the ability to detect, distinguish, and dissolve it, we are vulnerable to being gaslit into taking actions on behalf of malicious intentions. Therefore, it is crucial to achieve a deeper understanding of how gaslighters design information to manipulate perceptions. 

In this paper, we propose a finite-stage dynamic Stackelberg game framework \cite{bacsar1998dynamic} to model the gaslighting procedure. The gaslighter moves first at each stage, followed by the DM, capturing the sequential nature of the game where the gaslighter takes the leading role while the DM acts passively as the follower, unaware of the gaslighter's presence. The dynamic game framework is built on a partially observed stochastic control system, where the gaslighter has access to both the state and observation processes, while the DM only has access to the observation process. The gaslighter's advantage in full state information allows him to shape the DM's perceptions by designing observation distributions, while the DM's control decisions depend on her information states, which are generated using a version of the Bayes rule. The gaslighter limits his design efforts to avoid detection through a stealthiness metric and an approximate feedback Stackelberg solution, which breaks the direct dependence of gaslighting efforts on states, concealing state information from the DM.

This work examines the impact of gaslighting by using a metric induced by the $L^1$ norm defined for information states, which captures the influences of gaslighting on the DM's perceptions. The robustness of the information states to the gaslighting effort is used to measure the impact of gaslighting. We show that the variation in perception given samples of observations is upper-bounded by a quantity that accumulates the effective power of the gaslighting effort at each stage, irrespective of the control. We also investigate the impact of gaslighting on the optimal value received by the DM, extending the robustness analysis.

To restrict the power of the gaslighter and ensure that they stay undetected, we propose a measure of stealthiness. This measure is defined in a stage-wise manner to guarantee the gaslighter's stealthiness at each time when the DM obtains a new observation and can judge its credibility. When the gaslighting effort is stealthy, the upper bound on the deviation of the DM's optimal value reduces to a compact form. These robustness results hold for generic partially observed stochastic control problems. We adopt the solution concept of approximate feedback Stackelberg equilibrium, where the gaslighter uses suboptimal gaslighting efforts, reducing the computation burden of finding exact optimal actions. This approximation adds an additional level of difficulty for the DM in inferring the true state information. We also provide estimations of the gaslighter's costs when their effort is stealthy, characterizing the anticipated consequence of the gaslighter when they aim to gaslight without getting detected.

We review the related works in Section \ref{sec:related}. 
The dynamic Stackelberg game framework for modeling gaslighting will be presented in Section \ref{sec:problem} after we introduce a partially observed stochastic control problem that serves as the foundation.
We present the analysis of the impact of gaslighting and the solution of the gaslighting game in Section \ref{sec:analysis}.
Section \ref{sec:conclusion} concludes the paper.

\section{Related Works}
\label{sec:related}
Our framework builds upon the classic information state approach to partially observed stochastic control problems \cite{elliott2017discrete,james1994risk,kumar2015stochastic}, but takes a game-theoretic perspective and focuses on the impact of information design. Unlike previous work, which either formulated the problem as a small noise limit or used the H-infinity criteria, we take a distributional viewpoint and consider the design of observation distributions. Additionally, the gaslighter in our framework takes a generic role and has their own objective function to optimize, resulting in a nonzero-sum setting.

Our framework is closely related to the literature on one-sided partially observable stochastic games (OSPOSGs) \cite{zheng2022stackelberg, horak2023solving,horner2010markov,li2022commitment}. OSPOSGs model various strategic relations in reality and avoid the unnecessary derivation of the belief hierarchy. The role of the gaslighter in our framework is different, as they cannot control the state of the game directly. Instead, they must influence the state evolution through the actions of the other player by constructing certain observation distributions. This indirectness makes it challenging to investigate the impact of gaslighting on the system. Inspired by \cite{mcdonald2022robustness, kara2022robustness}, we address this challenge by studying the robustness of information states and optimal value to deviations in observations, measured by a metric suggested in \cite{elliott2017discrete}.

Our work also contributes to the emerging literature on strategic perception manipulation. The Stackelberg game formulation has been adopted in \cite{liu2022stackelberg} to study the design of risk perceptions in a static setting. The design of information has been considered in \cite{liu2022eproach} to study reopening policies from quarantines and in \cite{paarporn2022strategically} to investigate the potential advantages of revealing privileged knowledge. In \cite{sasaki2023strategic}, the author has also considered strategic manipulation in group decisions.

\section{Problem Formulation}
\label{sec:problem}
\subsection{Preliminary}
\label{sec:preliminary}
Consider the following discrete-time system:
\begin{equation}
    \begin{cases}      x_{k+1}=b(x_k,u_k)+w_k, \\
    y_{k+1}=h(x_k)+v_k
    \end{cases}
    \label{eq:system}
\end{equation}
on a probability space $(\Omega,\mathcal{F},\mathcal{P}^u)$ over a finite time interval $k=0,1, \cdots,K.$
The process $x\in \mathcal{X}\subset \mathbb{R}^n$ is the state process of the system which is unobservable to the decision-maker (DM).
The process $y\in \mathcal{Y}\subset \mathbb{R} $ denotes the observations of the DM.
Assume that  $w_k\in \mathbb{R}^n$ is i.i.d. for all $k=0,1,\cdots,K$ with density function $\psi(w)$.
Similarly, assume that $v_k\in \mathbb{R}$ is i.i.d. for all $k=0,1,\cdots,K$ with density function $\phi(v)$, and is independent of $x_0$ and $w_k$ for all $k=0,1,\cdots,K$.
The control $u_k$ at stage $k$ lives in a compact set $U_k\subset \mathbb{R}^m$.

Define the probability measure $ \mathcal{P}^\dagger$ by introducing the following Radon-Nikodym derivative:
\begin{equation}
    \frac{d\mathcal{P}^u}{d\mathcal{P}^\dagger}=Z_k=\prod_{i=1}^k\Psi(x_{i-1},y_i),
    \label{eq:change of measure}
\end{equation}
where 
\begin{equation}
    \Psi(x,y):=\frac{\phi(y-h(x))}{\phi(y)}.
    \label{eq:Psi}
\end{equation}
Then, under measure $\mathcal{P}^\dagger$, the observations $y_1,\cdots,y_K$ are i.i.d. following density $\phi$.

The cost function of the DM admits the following form:
\begin{equation}
    J(u)=\mathbb{E}^u\left[ \text{exp}\mu\left( \sum_{i=0}^{K-1}L(x_i,u_i)+\Phi(x_K) \right) \right],
    \label{eq:cost}
\end{equation}
where $\mu>0$ denotes the risk-sensitivity.

Let $\sigma_k\in L^1(\mathbb{R}^n)$ denote the information state at stage $k$.
The update of the information states is governed by the bounded linear operator $\Sigma^*: L^{\infty *}(\mathbb{R}^n) \rightarrow  L^{\infty *}(\mathbb{R}^n)$ defined as:
\begin{equation}    
\begin{aligned}
    \Sigma^*(u,y)\sigma(z)=&\int_{\mathbb{R}^n}\psi(z-b(\xi,u))\\
    &\cdot \text{exp}\left( \mu L(\xi,u) \right) \Psi(\xi, y)\sigma(\xi)d\xi,
    \label{eq:information state update}
\end{aligned}
\end{equation}
leading to the following recursion starting from the initial probability density $\rho$ on $\mathcal{X}$:
\begin{equation}
    \begin{cases}      \sigma_k=\Sigma^*(u_{k-1},y_k)\sigma_{k-1}, \\
    \sigma_0=\rho.
    \end{cases}
    \label{eq:information state recursion}
\end{equation}
For notational simplicity, define $T(u,y,z,\xi)=\psi(z-b(\xi,u))\text{exp}\left( \mu L(\xi,u) \right) \phi(y-h(\xi))$.

The loss function (\ref{eq:cost}) can be expressed in the following under the measure $\mathcal{P}^\dagger$:
\begin{equation}
    J(u)=\mathbb{E}^\dagger\left[ \prod_{i=1}^K \Psi(x_{i-1},y_i)\cdot\text{exp}\mu\left( \sum_{i=0}^{K-1}L(x_i,u_i)+\Phi(x_K) \right) \right],
    \label{eq:cost under P+}
\end{equation}
where the expectation $\mathbb{E}^\dagger[\cdot]$ can be computed since, after applying the change of measure (\ref{eq:change of measure}), the random variables $y_1,\cdots,y_K$ are i.i.d. with density $\phi$.
It is shown in \cite{james1994risk} that (\ref{eq:cost under P+}) can be equivalently represented using the information states as follows:
\begin{equation}
    \mathcal{J}(u):=\mathbb{E}^\dagger \left[ \int_{\mathbb{R}^n}\sigma_K(z)\text{exp}\left(\mu\Phi(z)\right)dz \right].
    \label{eq:cost information states}
\end{equation}

\subsection{The Gaslighting Game}
\label{sec:game formulation}

In this section, we describe a Stackelberg game between a DM and a gaslighter in a networked cyber system. The DM takes dynamic actions based on her local knowledge of the system, while the gaslighter intends to manipulate the DM's perceptions to influence her actions. The gaslighter has access to global information of the system $x$ and $y$, whereas the DM only observes the observation process $y$. This asymmetry of knowledge is modeled by the dynamical system (\ref{eq:system}). In the game, the DM acts as the follower and determines the controls of system (\ref{eq:system}). Her objective is to minimize the cost function (\ref{eq:cost}). Since the DM lacks information about the true states of the system, she constructs information states using her observations, which represent her perceptions of the environment. The gaslighter, or the leader, cannot directly influence the system's evolution but can manipulate the DM's perceptions by changing the observations received by her. Consequently, he tries to influence the system through the controls chosen by the DM.


Let $\mathbb{P}$ denote the set of probability density functions on $\mathbb{R}$.
The effort of the gaslighter is captured by a sequence of probability density functions $\phi^\circ=(\phi_1^\circ,\cdots,\phi_K^\circ)$ of the observations, \ie, $\phi^\circ_k\in \mathbb{P}$ for all $k=1,2,\cdots,K$.
The effort $\phi_k^\circ(y_k)$ at stage $k$ has an impact on the generation of the information state of stage $k$, which represents the perspective of the DM in perceiving the state $x_k$.
In particular, under the gaslighting effort $\phi_1^\circ,\cdots,\phi_K^\circ$, the update of the information states $\sigma^\circ_1,\sigma^\circ_2,\cdots,\sigma^\circ_K$ become:
\begin{equation}  
\begin{aligned}  
 \sigma^\circ_{k+1}&=\Sigma^{*\circ}_k(u,y)\sigma^\circ_k(z)\\
 &\hspace{-5mm}=\int_{\mathbb{R}^n}\psi(z-b(\xi,u))\text{exp}\left( \mu L(\xi,u) \right) \Psi^\circ_k(\xi, y)\sigma^\circ_k(\xi)d\xi,
    \label{eq:information state update gaslighted}    
\end{aligned}
\end{equation}
where 
  $  \Psi^\circ_k(x,y)=\frac{\phi(y-h(x))}{\phi^\circ_k(y)}.$
Note that 
it does not influence the measure $\mathcal{P}^\dagger$.
The interpretation of the above setting is that there is a mismatch between the DM's assumption and the reality of the environment. 
In the DM's assumption, the observations $y_1,y_2,\cdots,y_K$ are i.i.d. and follow the probability density $\phi$.
So, she uses $\phi$ to construct the probability measure $\mathcal{P}^\dagger$.
However, in reality, the gaslighter is present, and the DM's information states are generated according to (\ref{eq:information state update gaslighted}) under the gaslighting effort $\phi^\circ$.

The goal of the gaslighter  is to guide the system to reach a favorable state in finite stages.
We use the function $\Gamma:\mathcal{X}\rightarrow \mathbb{R}$ to denote the terminal state cost and the function $H:\mathbb{P}\rightarrow \mathbb{R}_+$ to represent the cost of design with $H(\phi)=0$.
The gaslighter's problem is summarized as follows:
\begin{equation}
    \min_{\phi^\circ} \  \ \mathcal{I}(\phi^\circ):= \mathbb{E}^{u^\circ}\left[ \text{exp}(\mu \Gamma(x_K))\right] -\gamma  + \sum_{i=1}^K H(\phi^\circ_i),
    \label{eq:cost of gaslighter}
\end{equation}
where $\mathbb{E}^{u^\circ}$ denotes the expectation operator with respect to the probability measure induced by the control $u^\circ=(u^\circ_1,u^\circ_2,\cdots,u^\circ_K)\in U_1\times U_2 \times \cdots \times U_K$ of the DM when her observations follow $\phi^\circ$ and $\gamma:=\mathbb{E}^u\left[ \text{exp}(\mu \Gamma(x_K) )\right]$ with $u$ representing the control of the DM when her observations are i.i.d. and follow $\phi$.
Note that the term $\gamma$ in (\ref{eq:cost of gaslighter}) is a normalizing constant since its value is independent of $\phi^\circ$. 

The DM's problem in reaction to the gaslightinig effort $\phi^\circ$ admits the following form:
\begin{equation}
    \min_{u^\circ} \  \  \mathcal{J}^\circ(u^\circ):=\mathbb{E}^\dagger \left[ \int_{\mathbb{R}^n}\sigma^\circ_K(z)\text{exp}\left(\mu\Phi(z)\right)dz \right],
    \label{eq:cost of DM}
\end{equation}
where the information states $\sigma^\circ_1,\sigma^\circ_2,\cdots,\sigma^\circ_K$ are generated according to (\ref{eq:information state update gaslighted}).

\section{Impact of Gaslighting}
\label{sec:analysis}
In this section, we first investigate the impact of gaslighting by analyzing the robustness of the DM's information states and optimal value to the gaslighting effort.
During the analysis, we introduce constraints on the gaslighting effort to guarantee stealthiness.
When the gaslighting is stealthy, the robustness results take simple forms.
Then, we leverage the robustness  to provide estimations of the gaslighter's optimal design cost, which centers around the solution concept of approximate feedback Stackelberg equilibrium. 

\subsection{Robustness to Gaslighting Effort}
\label{sec:analysis:robust}
The analysis follows two steps. 
First, we calibrate the robustness of the information state process with respect to changes in the distribution of observation process. 
Then, we leverage these results to study the robustness of the optimal value with respect to the deviations in the information state process.

Let $\sigma_0,\sigma_1,\cdots,\sigma_K$ denote the original information state process updated according to $\sigma_{k+1}=\Sigma^*(u_{k},y_{k+1})\sigma_k$ with the distribution of observations following $\phi$.
Let $\sigma_0^\circ,\sigma_1^\circ,\cdots,\sigma_K^\circ$ denote the gaslighted information state process updated according to $\sigma^\circ_{k+1}=\Sigma^{*\circ}_{k+1}(u_k,y_{k+1})\sigma^\circ_k$ with the distribution of observations following $\phi^\circ_{k+1}(y_{k+1})$.
We use the notation $\sigma_{k+1}^\phi(\sigma_k)$ to denote the information state at stage $k+1$ obtained by using an operator involving $\phi$ on the previous information state $\sigma_k$.

Recall that an information state $\sigma$ is an unnormalized density function, \ie, $\sigma\in L^1(\mathbb{R}^n)$.
A metric induced by the $L^1$ norm  can be defined for $\sigma^1, \sigma^2 \in L^1(\mathbb{R}^n)$ as
\begin{equation}
    d(\sigma^1,\sigma^2)=||\sigma^1-\sigma^2||_{L^1}=\int_{\mathbb{R}^n}|\sigma^1(z)-\sigma^2(z)|dz.
    \label{eq:L1 metric}
\end{equation}

\paragraph{Robustness of information states}
Let $\hat{\phi}:=\text{max}_y \phi(y)$ and $l:=\text{max}_{x,u}\text{exp}\left(\mu L(x,u)\right)$.
\begin{lemma}
\label{lemma:robustness fix phi}
Given $\Bar{\sigma}_k, \hat{\sigma}_k \in L^1(\mathbb{R}^n)$ and $\phi$, the following inequality holds for a given observation $y$ and all controls $u$:
\begin{equation}
    d(\sigma_{k+1}^\phi(\Bar{\sigma}_k),\sigma_{k+1}^\phi(\hat{\sigma}_k))
    \leq
    \phi^{-1}(y) \hat{\phi} l\cdot d(\Bar{\sigma}_k,\hat{\sigma}_k).
    \label{eq:robustness fix phi}
\end{equation}
\end{lemma}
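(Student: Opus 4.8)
The plan is to unfold the definition of the update operator and reduce the $L^1$ distance to a single integral over the previous-stage variable $\xi$. Writing $\sigma_{k+1}^\phi(\bar{\sigma}_k)=\Sigma^*(u,y)\bar{\sigma}_k$ and $\sigma_{k+1}^\phi(\hat{\sigma}_k)=\Sigma^*(u,y)\hat{\sigma}_k$ through (\ref{eq:information state update}) and subtracting, the linearity of $\Sigma^*$ in its density argument lets me express the difference of outputs as the same kernel $\psi(z-b(\xi,u))\exp(\mu L(\xi,u))\Psi(\xi,y)$ applied to the difference $\bar{\sigma}_k(\xi)-\hat{\sigma}_k(\xi)$. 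First I would form $d(\sigma_{k+1}^\phi(\bar{\sigma}_k),\sigma_{k+1}^\phi(\hat{\sigma}_k))=\int_{\mathbb{R}^n}\big|\int_{\mathbb{R}^n}(\cdots)(\bar{\sigma}_k-\hat{\sigma}_k)\,d\xi\big|\,dz$ and push the absolute value inside the inner integral by the triangle inequality. Because $\psi$ and $\phi$ are probability densities and $\exp(\mu L)>0$, every factor of the kernel is nonnegative, so the absolute value lands solely on $\bar{\sigma}_k-\hat{\sigma}_k$.

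The second step is to interchange the order of the $dz$ and $d\xi$ integrations. Since the integrand is nonnegative once the absolute value is inserted, Tonelli's theorem justifies the swap with no integrability side condition. Performing the $dz$ integration first, the only $z$-dependent factor is $\psi(z-b(\xi,u))$, and since $\psi$ is a density on $\mathbb{R}^n$ it integrates to $1$ for every fixed shift $b(\xi,u)$. This is the key simplification: it collapses the double integral to $\int_{\mathbb{R}^n}\exp(\mu L(\xi,u))\,\Psi(\xi,y)\,|\bar{\sigma}_k(\xi)-\hat{\sigma}_k(\xi)|\,d\xi$.

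The final step is to bound the two surviving kernel factors by their suprema. I would use $\exp(\mu L(\xi,u))\le l$ uniformly in $\xi$ and $u$ by the definition of $l$, and $\Psi(\xi,y)=\phi(y-h(\xi))/\phi(y)\le \hat{\phi}\,\phi^{-1}(y)$ using $\phi(y-h(\xi))\le\hat{\phi}$. Pulling the constants $l$ and $\hat{\phi}\,\phi^{-1}(y)$ out of the integral leaves exactly $\int_{\mathbb{R}^n}|\bar{\sigma}_k(\xi)-\hat{\sigma}_k(\xi)|\,d\xi=d(\bar{\sigma}_k,\hat{\sigma}_k)$, which is the claimed inequality (\ref{eq:robustness fix phi}).

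I do not expect a genuine obstacle, as the argument is a direct kernel estimate asserting a bound on the operator norm of $\Sigma^*(u,y)$ acting on $L^1(\mathbb{R}^n)$. The one point that warrants care is the interchange of integration together with the appeal to $\int_{\mathbb{R}^n}\psi(z-b(\xi,u))\,dz=1$, which must hold for every $u$ and every $\xi$; the translation invariance of Lebesgue measure makes this integral independent of the shift $b(\xi,u)$, so the simplification is legitimate regardless of the control. I would also flag that the bound on $\Psi$ is the place where the dependence on the observation $y$ enters, through the factor $\phi^{-1}(y)$, which is why the estimate is stated for a fixed $y$ and uniformly over $u$.
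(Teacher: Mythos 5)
Your proof is correct and follows essentially the same route as the paper's: unfold the operator, collapse the $z$-integral using $\int_{\mathbb{R}^n}\psi(z-b(\xi,u))\,dz=1$, and bound the remaining kernel $\exp(\mu L(\xi,u))\phi(y-h(\xi))$ by $l\hat{\phi}$. Your direct supremum bound on the kernel factors is exactly the paper's application of H\"older's inequality with $p=1$, $q=\infty$, so the two arguments coincide; if anything, your explicit use of the triangle inequality before swapping integrals is slightly more careful than the paper's, which writes that step as an equality.
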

\begin{proof}
From (\ref{eq:information state update}) and (\ref{eq:L1 metric}), we obtain the following equations:
\begin{equation}
    \begin{aligned}
        &\  \  d(\sigma_{k+1}^\phi(\Bar{\sigma}_k),\sigma_{k+1}^\phi(\hat{\sigma}_k)) \\
        =& \int_{\mathbb{R}^n}\left|[\sigma_{k+1}^\phi(\Bar{\sigma}_k)](z) -[\sigma_{k+1}^\phi(\hat{\sigma}_k)](z)  \right|dz \\
        =&\phi^{-1}(y)\int_{\mathbb{R}^n}\int_{\mathbb{R}^n}T(u,y,z,\xi)\left| \Bar{\sigma}_k(\xi)-\hat{\sigma}_k(\xi) \right|d\xi dz \\
        =& \phi^{-1} \int_{\mathbb{R}^n}\left( \int_{\mathbb{R}^n}T(u,y,z,\xi)dz\right)\cdot \left| \Bar{\sigma}_k(\xi)-\hat{\sigma}_k(\xi) \right| d\xi \\
        =& \phi^{-1} \int_{\mathbb{R}^n} \Tilde{T}(u,y,\xi)\cdot \left| \Bar{\sigma}_k(\xi)-\hat{\sigma}_k(\xi) \right| d\xi,
        \label{eq:ineq fix phi 1}
    \end{aligned}
\end{equation}
where 
\begin{equation*}
    \begin{aligned}
        \Tilde{T}(u,y,\xi):=&\int_{\mathbb{R}^n}T(u,y,z,\xi)dz\\
        =&\int_{\mathbb{R}^n}\psi(z-b(\xi,u))dz\cdot \text{exp}\left(\mu L(\xi,u)\right)\phi(y-h(\xi))\\
        =&\text{exp}\left(\mu L(\xi,u)\right)\phi(y-h(\xi)).
    \end{aligned}
\end{equation*}
Using H\"{o}lder's inequality, (\ref{eq:ineq fix phi 1}) leads to
\begin{equation}    
\begin{aligned}
\  \ &d(\sigma_{k+1}^\phi(\Bar{\sigma}_k),\sigma_{k+1}^\phi(\hat{\sigma}_k)) 
          \leq  \phi^{-1}(y) \cdot \\
          &\left( \int_{\mathbb{R}^n} |\Bar{\sigma}_k(\xi)-\hat{\sigma}_k(\xi)|^pd\xi \right)^{1/p}\cdot \left( \int_{\mathbb{R}^n} |\Tilde{T}(u,y,\xi)|^qd\xi \right)^{1/q},
          \label{eq:ineq fix phi 2}
\end{aligned}
\end{equation}
for $p,q\in [1,\infty]$ with $1/p+1/q=1$.
Choosing $p=1$ and $q=\infty$, (\ref{eq:ineq fix phi 2}) reduces to
\begin{equation*}
    \begin{aligned}
          d(\sigma_{k+1}^\phi(\Bar{\sigma}_k),\sigma_{k+1}^\phi(\hat{\sigma}_k)) 
       & \leq  
        \phi^{-1}(y)d(\Bar{\sigma}_k,\hat{\sigma}_k)\text{sup}_{\xi} \Tilde{T}(u,y,\xi) \\
       &\leq 
         \phi^{-1}(y) \hat{\phi} l \cdot d(\Bar{\sigma}_k,\hat{\sigma}_k).
    \end{aligned}
\end{equation*}
This completes the proof.
\end{proof}

Since $\sigma_k\in L^1(\mathbb{R}^n)$ for all $k=0,1,\cdots,K$, we use $\zeta>0$ to denote the maximum $L^1$-norm of all possible information states, \ie, $\zeta=\max_{k, \sigma_k}||\sigma_k||_{L^1}$.
\begin{lemma}
\label{lemma:robustness fix sigma}
Given  $\phi$, $\phi^\circ$, and $\sigma_k\in L^1(\mathbb{R}^n)$, the following inequality holds for a given observation $y$ and all controls $u$:
\begin{equation}
    d(\sigma_{k+1}^{\phi^\circ}(\sigma_k),\sigma_{k+1}^\phi(\sigma_k))\leq \hat{\phi}l\zeta\cdot \left| \frac{1}{\phi^\circ(y)}-\frac{1}{\phi(y)} \right|.
    \label{eq:robustness fix sigma}
\end{equation}
\end{lemma}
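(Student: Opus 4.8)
The plan is to reuse the machinery from the proof of Lemma \ref{lemma:robustness fix phi}, but the computation is actually simpler here because the previous information state $\sigma_k$ is held fixed and only the normalizing factor attached to the observation likelihood changes (from $\phi(y)$ to $\phi^\circ(y)$). First I would write both updated states explicitly in terms of the kernel $T$. By (\ref{eq:information state update}) and the definition of $T(u,y,z,\xi)$, the operator built from $\phi$ gives $[\sigma_{k+1}^\phi(\sigma_k)](z)=\phi^{-1}(y)\int_{\mathbb{R}^n}T(u,y,z,\xi)\sigma_k(\xi)d\xi$, while by (\ref{eq:information state update gaslighted}) the gaslighted operator built from $\phi^\circ$ gives the identical inner integral scaled instead by $1/\phi^\circ(y)$. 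Since the inner integral $\int_{\mathbb{R}^n}T(u,y,z,\xi)\sigma_k(\xi)d\xi$ is common to both, subtracting yields, pointwise in $z$, this common integral multiplied by the scalar $\left(\frac{1}{\phi^\circ(y)}-\frac{1}{\phi(y)}\right)$.

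Second, I would take the $L^1$ norm in $z$ via (\ref{eq:L1 metric}). The scalar factor $\left|\frac{1}{\phi^\circ(y)}-\frac{1}{\phi(y)}\right|$ pulls straight out of the integral, leaving the quantity $\int_{\mathbb{R}^n}\left|\int_{\mathbb{R}^n}T(u,y,z,\xi)\sigma_k(\xi)d\xi\right|dz$. Here I would invoke that an information state is a nonnegative unnormalized density and that $T\geq 0$, so the inner integral is nonnegative and the outer absolute value is vacuous. Tonelli's theorem then justifies swapping the order of integration to recover $\Tilde{T}(u,y,\xi)=\int_{\mathbb{R}^n}T(u,y,z,\xi)dz=\text{exp}\left(\mu L(\xi,u)\right)\phi(y-h(\xi))$, exactly as computed in the proof of Lemma \ref{lemma:robustness fix phi}, since $\psi(\cdot-b(\xi,u))$ integrates to one.

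Finally, I would bound $\Tilde{T}(u,y,\xi)\leq \hat{\phi}l$ using $\phi(y-h(\xi))\leq \hat{\phi}$ and $\text{exp}\left(\mu L(\xi,u)\right)\leq l$, pull this uniform bound out of the remaining integral, and use $\int_{\mathbb{R}^n}\sigma_k(\xi)d\xi=||\sigma_k||_{L^1}\leq\zeta$. Combining the three factors with the scalar extracted in the first step produces the claimed bound $\hat{\phi}l\zeta\cdot\left|\frac{1}{\phi^\circ(y)}-\frac{1}{\phi(y)}\right|$.

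The only genuinely delicate point, and the step I would flag as the main obstacle, is the justification for dropping the absolute value and interchanging the order of integration: both rest on the nonnegativity of $\sigma_k$ (as a density) together with $T\geq 0$. This is precisely what makes the present argument diverge from Lemma \ref{lemma:robustness fix phi}, where $\sigma_k$ entered as a \emph{signed} difference $\Bar{\sigma}_k-\hat{\sigma}_k$ and thus required a H\"older step; no H\"older inequality is needed here. I would therefore state the nonnegativity of the information state explicitly so that the reduction of the inner integral to $\Tilde{T}$ and the final mass estimate $||\sigma_k||_{L^1}\leq\zeta$ are fully rigorous.
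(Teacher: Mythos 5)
Your proof is correct and follows essentially the same route as the paper's: factor the scalar $\left|\frac{1}{\phi^\circ(y)}-\frac{1}{\phi(y)}\right|$ out of the pointwise difference, reduce the remaining double integral to $\int_{\mathbb{R}^n}\Tilde{T}(u,y,\xi)\sigma_k(\xi)d\xi$ by integrating out $z$, and bound that integral by $\hat{\phi}l\zeta$ using the pointwise bounds on $\text{exp}\left(\mu L(\xi,u)\right)$ and $\phi(y-h(\xi))$ together with $||\sigma_k||_{L^1}\leq\zeta$. Your explicit appeal to nonnegativity of $T$ and $\sigma_k$ to drop the absolute value and justify the Tonelli interchange, and your observation that no H\"older step is needed (unlike Lemma \ref{lemma:robustness fix phi}), merely make rigorous what the paper's proof does implicitly.
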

\begin{proof}
From (\ref{eq:information state update}) and (\ref{eq:L1 metric}), we obtain the following equations:
\begin{equation}
    \begin{aligned}
          &\  \  d(\sigma_{k+1}^{\phi^\circ}(\sigma_k),\sigma_{k+1}^\phi(\sigma_k)) \\
          =&   \int_{\mathbb{R}^n}\left|[\sigma_{k+1}^{\phi^\circ}(\sigma_k)](z) -[\sigma_{k+1}^\phi(\sigma_k)](z)  \right|dz \\
          =& \left| \frac{1}{\phi^\circ(y)}-\frac{1}{\phi(y)} \right|\cdot \int_{\mathbb{R}^n}\int_{\mathbb{R}^n}|T(u,y,z,\xi)\sigma_k(\xi)|d\xi dz \\
          =& \left| \frac{1}{\phi^\circ(y)}-\frac{1}{\phi(y)} \right|\cdot \int_{\mathbb{R}^n}\Tilde{T}(u,y,\xi)\sigma_k(\xi)d\xi.
          \label{eq:ineq fix sigma 1}
    \end{aligned}
\end{equation}
Since $\int_{\mathbb{R}^n}\Tilde{T}(u,y,\xi)\sigma_k(\xi)d\xi\leq +\infty$ for all $u$ and $y$, we obtain the following inequalit:
\begin{equation}
    \begin{aligned}
        &\  \ \int_{\mathbb{R}^n}\Tilde{T}(u,y,\xi)\sigma_k(\xi)d\xi \\
        =& \int_{\mathbb{R}^n} \text{exp}\left(\mu L(\xi,u)\right)\phi(y-h(\xi)) \sigma_k(\xi)d\xi \\
        \leq& \int_{\mathbb{R}^n} \max_u\text{exp}\left(\mu L(\xi,u)\right)\max_y\phi(y-h(\xi)) \sigma_k(\xi)d\xi \\
        \leq& \hat{\phi} l \zeta.
        \label{eq:ineq fix sigma 2}
    \end{aligned}
\end{equation}
Combining (\ref{eq:ineq fix sigma 1}) and (\ref{eq:ineq fix sigma 2}) leads to (\ref{eq:robustness fix sigma}).
\end{proof}

Let $c:=\hat{\phi} l$.
Let $d_k:=d(\sigma^\circ_k,\sigma_k)$.
Let $Y_k(y_k):=\left|\frac{1}{\phi^\circ_k(y_k)}-
\frac{1}{\phi(y_k)}\right|$ for $k=1,2,\cdots,K$.
The impact of gaslighting on the perceptions of the DM is summarized in the following result.
\begin{theorem}
\label{thm:robustness of info states}
Given observations $y_1,y_2,\cdots,y_k$, the deviation of information states at stage $k$ under the gaslighting efforts $\phi^\circ_1,\phi^\circ_2,\cdots,\phi^\circ_{k}$ satisfies the following inequality regardless of the controls $u_0,u_1,\cdots,u_{k-1}$:
\begin{equation}
\begin{aligned}
    d_k\leq & \frac{c^k d_0}{\phi(y_k)\cdot \cdots \cdot \phi(y_1)}
    +\frac{c^k\zeta Y_1(y_1)}{\phi(y_k)\cdot \cdots \cdot \phi(y_2)} \\
    &+\frac{c^{k-1}\zeta Y_2(y_2)}{\phi(y_k)\cdot \cdots \cdot \phi(y_3)}+\cdots 
    + \frac{c^2\zeta Y_{k-1}(y_{k-1})}{\phi(y_k)}
    +c\zeta Y_{k}(y_k).
    \label{eq:robustness of info states}
\end{aligned}    
\end{equation}
\end{theorem}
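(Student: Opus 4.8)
The plan is to collapse the two single-step lemmas into one affine recursion relating $d_{k+1}$ to $d_k$, and then to unroll that recursion into the stated closed form. The decisive device is a hybrid (telescoping) comparison that, at each stage, separates the deviation already carried by the incoming information state from the fresh deviation injected by the gaslighter's design at the current stage.

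First I would write the true gaslighted and nominal updates as $\sigma_{k+1}^\circ=\sigma_{k+1}^{\phi^\circ}(\sigma_k^\circ)$ and $\sigma_{k+1}=\sigma_{k+1}^\phi(\sigma_k)$, and insert the intermediate state $\sigma_{k+1}^\phi(\sigma_k^\circ)$, obtained by propagating the gaslighted state $\sigma_k^\circ$ through the \emph{nominal} operator built from $\phi$. The triangle inequality for the metric (\ref{eq:L1 metric}) then yields
\begin{equation*}
d_{k+1}\leq d\bigl(\sigma_{k+1}^{\phi^\circ}(\sigma_k^\circ),\sigma_{k+1}^\phi(\sigma_k^\circ)\bigr)+d\bigl(\sigma_{k+1}^\phi(\sigma_k^\circ),\sigma_{k+1}^\phi(\sigma_k)\bigr).
\end{equation*}
The first term compares the \emph{same} incoming state under two different observation densities, so Lemma \ref{lemma:robustness fix sigma}, applied with $\sigma_k^\circ$ in the role of the fixed state and $y=y_{k+1}$, bounds it by $c\zeta Y_{k+1}(y_{k+1})$; here it is essential that $\zeta$ is the maximal $L^1$-norm over \emph{all} reachable information states, so the estimate remains valid when the fixed state is the gaslighted $\sigma_k^\circ$. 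The second term compares two different incoming states under the common nominal density $\phi$, so Lemma \ref{lemma:robustness fix phi} bounds it by $\phi^{-1}(y_{k+1})\,c\,d_k$. Combining the two gives the one-step recursion
\begin{equation*}
d_{k+1}\leq\frac{c}{\phi(y_{k+1})}d_k+c\zeta Y_{k+1}(y_{k+1}).
\end{equation*}

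Finally I would unroll this recursion, either by induction on $k$ or by direct iteration. The homogeneous part contributes the product of the amplification factors $c/\phi(y_i)$ acting on $d_0$, giving the leading term $c^k d_0/\bigl(\phi(y_k)\cdots\phi(y_1)\bigr)$. Each injected term $c\zeta Y_i(y_i)$, introduced at stage $i$, is subsequently amplified by the factors $c/\phi(y_j)$ for every later stage $j>i$, so its total coefficient is $c^{\,k-i+1}\zeta/\bigl(\phi(y_k)\cdots\phi(y_{i+1})\bigr)$; for $i=k$ the denominator is the empty product and this degenerates to the bare term $c\zeta Y_k(y_k)$. Collecting all contributions reproduces (\ref{eq:robustness of info states}) term by term.

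The argument is largely bookkeeping once the recursion is established; the only genuine modeling choice is the hybrid state $\sigma_{k+1}^\phi(\sigma_k^\circ)$, which cleanly decouples the propagation of old error from the injection of new error and lets each lemma handle exactly one source. The single point deserving care is invoking Lemma \ref{lemma:robustness fix sigma} on $\sigma_k^\circ$ rather than a nominal state, which is precisely why $\zeta$ is defined as a maximum over all possible information states. I would also remark that the final bound is control-independent for free: both lemmas already hold uniformly over $u$, so no separate argument over $u_0,\ldots,u_{k-1}$ is required.
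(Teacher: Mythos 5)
Your proposal is correct and follows essentially the same route as the paper: the same hybrid intermediate state $\sigma_{k+1}^\phi(\sigma_k^\circ)$ in the triangle inequality, Lemma \ref{lemma:robustness fix sigma} and Lemma \ref{lemma:robustness fix phi} applied to the two resulting terms to obtain the affine one-step recursion $d_{k+1}\leq \frac{c}{\phi(y_{k+1})}d_k+c\zeta Y_{k+1}(y_{k+1})$, followed by induction to unroll it into (\ref{eq:robustness of info states}). Your write-up is in fact slightly more careful than the paper's, since you make explicit both the unrolled coefficients $c^{k-i+1}\zeta/\bigl(\phi(y_k)\cdots\phi(y_{i+1})\bigr)$ and the point that $\zeta$ must majorize the $L^1$-norm of the gaslighted states $\sigma_k^\circ$ for Lemma \ref{lemma:robustness fix sigma} to apply.
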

\begin{proof}
We first observe the following relation using the triangle inequality:
\begin{equation*}
    \begin{aligned}
        d_{k+1}=&d(\sigma_{k+1}^{\phi^\circ_{k+1}}(\sigma^\circ_k),\sigma_{k+1}^\phi(\sigma_k))\\
        \leq & d(\sigma_{k+1}^{\phi^\circ_{k+1}}(\sigma^\circ_k),\sigma_{k+1}^\phi(\sigma^\circ_k))
        +d(\sigma_{k+1}^{\phi}(\sigma^\circ_k),\sigma_{k+1}^\phi(\sigma_k)).
    \end{aligned}
\end{equation*}
Then, the assertions in Lemma \ref{lemma:robustness fix phi} and Lemma \ref{lemma:robustness fix sigma} lead to 
\begin{equation}
    d_{k+1}\leq \frac{cd_k}{\phi_{k+1}(y_{k+1})}+c\zeta \left|\frac{1}{\phi^\circ_{k+1}(y_{k+1})}-\frac{1}{\phi(y_{k+1})}\right|.
    \label{eq:one step relation of dk}
\end{equation}
An induction on (\ref{eq:one step relation of dk}) leads to (\ref{eq:robustness of info states}).
\end{proof}

\paragraph{Stealthiness of gaslighting effort}
We restrict the power of the gaslighter by proposing the following definition of stealthiness.

\begin{definition}
\label{def:ESS}[Expected stage-wise stealthiness (ESS)]
The gaslighting effort $\phi^\circ_k$ is $s$-ESS at stage $k=1,2,\cdots,K$, if the following inequality holds:
\begin{equation}
    \sup_{u_{k-1}\in U_{k-1},\sigma_{k-1}\in L^1(\mathbb{R}^n)} \mathbb{E}_{\phi(y_k)}\left[ d(\sigma_{k}^{\phi^\circ_k}(\sigma_{k-1}),\sigma_{k}^{\phi}(\sigma_{k-1})) \right]\leq s,
    \label{eq:stagewise stealthiness}
\end{equation}
where $s>0$ denotes the trust level of the DM.
\end{definition}
The above stealthiness notion fulfills the following two goals.
Firstly, we aim to obtain a condition under which the gaslighting effort is stealthy on average instead of stealthy given specific observations.
Secondly, since the update of perception occurs at each stage, stealthiness must last for all $K$ stages. 
Note that in (\ref{eq:stagewise stealthiness}), the expectation is taken with respect to distribution $\phi$ instead of $\phi^\circ_k$.
This setting means that the stealthiness is evaluated from the perspective of the DM rather than the perspective of the gaslighter.
After all, it is the DM who tries to detect the existence of the gaslighter.

The following lemma presents a sufficient condition for ESS. 
We will use this condition to simplify the results of the robustness of optimal value.

\begin{lemma}
The gaslighting efffort $\phi^\circ_k$ is $s$-ESS at stage $k$, \ie, (\ref{eq:stagewise stealthiness}) holds at stage $k$, if the following holds for $\Bar{s}=s/(c\zeta)$:
\begin{equation}
    \int_{\mathbb{R}}\left|\frac{\phi(y)}{\phi^\circ_k(y)}-1\right|dy\leq \Bar{s}.
    \label{eq:ESS sufficient condition}
\end{equation}
\end{lemma}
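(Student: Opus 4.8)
The plan is to reduce the expected deviation in the definition of ESS directly to the normalized integral on the left-hand side of (\ref{eq:ESS sufficient condition}), using the pointwise estimate already established in Lemma \ref{lemma:robustness fix sigma}. First I would apply the bound (\ref{eq:robustness fix sigma}) with the stage index shifted by one, which gives, for every control $u_{k-1}\in U_{k-1}$, every $\sigma_{k-1}\in L^1(\mathbb{R}^n)$, and every observation $y_k$,
\begin{equation*}
    d(\sigma_{k}^{\phi^\circ_k}(\sigma_{k-1}),\sigma_{k}^{\phi}(\sigma_{k-1}))\leq c\zeta\left|\frac{1}{\phi^\circ_k(y_k)}-\frac{1}{\phi(y_k)}\right|,
\end{equation*}
with $c=\hat{\phi}l$. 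The decisive feature of this estimate is that its right-hand side is free of $u_{k-1}$, and that the dependence on $\sigma_{k-1}$ has already been absorbed into the uniform norm bound $\zeta=\max_{k,\sigma_k}||\sigma_k||_{L^1}$. Consequently the supremum over $u_{k-1}$ and $\sigma_{k-1}$ in (\ref{eq:stagewise stealthiness}) can be taken without changing the right-hand side.

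Next I would take the expectation of both sides with respect to the density $\phi(y_k)$. Since the right-hand side is deterministic in the controls and states, this yields
\begin{equation*}
    \sup_{u_{k-1},\sigma_{k-1}}\mathbb{E}_{\phi(y_k)}\left[d(\sigma_{k}^{\phi^\circ_k}(\sigma_{k-1}),\sigma_{k}^{\phi}(\sigma_{k-1}))\right]\leq c\zeta\int_{\mathbb{R}}\phi(y)\left|\frac{1}{\phi^\circ_k(y)}-\frac{1}{\phi(y)}\right|dy.
\end{equation*}
The remaining step is the algebraic identity $\phi(y)\left|1/\phi^\circ_k(y)-1/\phi(y)\right|=\left|\phi(y)/\phi^\circ_k(y)-1\right|$, which turns the $\phi$-weighted integral into exactly $\int_{\mathbb{R}}\left|\phi(y)/\phi^\circ_k(y)-1\right|dy$. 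Invoking the hypothesis (\ref{eq:ESS sufficient condition}) that this quantity is at most $\Bar{s}=s/(c\zeta)$, the whole expression is bounded by $c\zeta\cdot s/(c\zeta)=s$, which is precisely the ESS inequality (\ref{eq:stagewise stealthiness}).

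Almost all of this is mechanical once Lemma \ref{lemma:robustness fix sigma} is in hand; the only point requiring care is the interchange of the supremum with the expectation, and this is legitimate precisely because the bound of Lemma \ref{lemma:robustness fix sigma} is independent of the control and uses $\zeta$ as a uniform $L^1$-norm ceiling. I therefore expect no genuine obstacle beyond confirming that this uniformity is what makes the stage-wise, average-sense stealthiness condition collapse to the clean normalized form, and checking that the weight $\phi(y)$ introduced by the expectation is exactly what cancels one of the reciprocal densities to produce the integrand $\left|\phi(y)/\phi^\circ_k(y)-1\right|$.
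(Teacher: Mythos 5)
Your proposal is correct and follows essentially the same route as the paper: both arguments reduce the expected deviation to the bound $c\zeta\int_{\mathbb{R}}\phi(y)\left|\frac{1}{\phi^\circ_k(y)}-\frac{1}{\phi(y)}\right|dy$, use the cancellation $\phi(y)\left|\frac{1}{\phi^\circ_k(y)}-\frac{1}{\phi(y)}\right|=\left|\frac{\phi(y)}{\phi^\circ_k(y)}-1\right|$, and invoke the hypothesis to conclude the bound $c\zeta\Bar{s}=s$. The only cosmetic difference is that you cite the finished bound of Lemma \ref{lemma:robustness fix sigma} pointwise in $y$ before averaging, whereas the paper unrolls the exact identity from that lemma's proof inside the expectation and re-bounds the $\sigma$-dependent factor by $c\zeta$ after factorizing; the content is identical.
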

\begin{proof}
The assertion follows from the following inequality:
\begin{equation*}
\begin{aligned}
       &\  \  \sup_{\substack{u_{k-1}\in U_{k-1},\\  \sigma_{k-1}\in L^1(\mathbb{R}^n)}}\mathbb{E}_{\phi(y_k)}\left[ d(\sigma_{k}^{\phi^\circ_k}(\sigma_{k-1}),\sigma_{k}^{\phi}(\sigma_{k-1})) \right] \\
       =&  \sup_{\substack{u_{k-1}\in U_{k-1},\\  \sigma_{k-1}\in L^1(\mathbb{R}^n)}}\int_{\mathbb{R}}\phi(y)\left|\frac{1}{\phi^\circ_k(y)}-\frac{1}{\phi(y)}\right|\int_{\mathbb{R}^n}\Tilde{T}(u,y,\xi)\sigma_k(\xi)d\xi dy \\
       \leq& \int_{\mathbb{R}}\phi(y)\left|\frac{1}{\phi^\circ_k(y)}-\frac{1}{\phi(y)}\right|dy
       \\
       & \  \ \   \
       \cdot
       \sup_{\substack{u_{k-1}\in U_{k-1},\\  \sigma_{k-1}\in L^1(\mathbb{R}^n)}} 
       \int_{\mathbb{R}^n}\text{exp}\left(\mu L(\xi,u)\right)\hat{\phi} \sigma_k(\xi)d\xi \\
       \leq& c\zeta\int_{\mathbb{R}}\phi(y)\left|\frac{1}{\phi^\circ_k(y)}-\frac{1}{\phi(y)}\right|dy.
\end{aligned}
\end{equation*}
\end{proof}
While condition (\ref{eq:stagewise stealthiness}) is straightforward, the sufficient condition (\ref{eq:ESS sufficient condition})  directly connects ESS with the gaslighting effort $\phi^\circ$.
In particular, the left-hand side of (\ref{eq:ESS sufficient condition}) is a measure of the deviation between probability density functions $\phi$ and $\phi^\circ_k$.

Note that the stealthiness level in Definition \ref{def:ESS} can be stage-dependent. In the ensuing sections, we will assume identical stealthiness levels for all stages $k=1,2,\cdots,K$ for simplicity.
The analysis can be extended to the case of stage-dependent stealthiness level with slight complications in notations.

\paragraph{Robustness of optimal value}
Recall that the DM's objective function is 
\begin{equation*}
    \mathcal{J}_{\sigma_K}(u):=\mathbb{E}^\dagger \left[\int_{\mathbb{R}^n}\sigma_K(z)\text{exp}\left(\mu\Phi(z)\right)dz\right],
\end{equation*}
where we use the notation $\mathcal{J}_{\sigma_K}$ to emphasize the fact that the cost is associated with the information state process $\sigma_0,\sigma_1,\cdots,\sigma_K$.
We are interested in the robustness of the optimal objective value with respect to the deviation of the information state process, \ie, the value of $\mathcal{J}_{\sigma^\circ_K}-\mathcal{J}_{\sigma_K}$.

Let $e_{\Phi}=\max_{x\in \mathcal{X}} \text{exp}\left(\mu\Phi(x)\right)$.
Let $\Tilde{d}_k(y_1,\cdots,y_k)$ denote the right-hand side of (\ref{eq:robustness of info states}).
The impact of gaslighting on the optimal value of the DM's problem is summarized in the following result.
\begin{theorem}
 \label{thm:robustness of optimal value}   
 The deviation of the DM's objective value under the gaslighting effort $\phi^\circ_1,\cdots,\phi^\circ_K$ satisfies the following inequality regardless of the controls $u_0,u_1,\cdots,u_{k-1}$:
 \begin{equation}
     \mathcal{J}_{\sigma^\circ_K}-\mathcal{J}_{\sigma_K}
     \leq e_{\Phi} \cdot \mathbb{E}^\dagger\left[ \Tilde{d}_K(y_1,\cdots,y_K) \right].
     \label{eq:robustness of optimal value}
 \end{equation}
\end{theorem}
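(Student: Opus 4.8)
The plan is to exploit the fact that the gaslighting effort leaves the reference measure $\mathcal{P}^\dagger$ unchanged, so that both objective values $\mathcal{J}_{\sigma^\circ_K}$ and $\mathcal{J}_{\sigma_K}$ are expectations under the \emph{same} operator $\mathbb{E}^\dagger[\cdot]$ and differ only through the terminal information states $\sigma^\circ_K$ and $\sigma_K$. First I would subtract the two objectives and use linearity of $\mathbb{E}^\dagger$ to combine them into a single expectation,
\begin{equation*}
  \mathcal{J}_{\sigma^\circ_K}-\mathcal{J}_{\sigma_K}
  =\mathbb{E}^\dagger\left[\int_{\mathbb{R}^n}\left(\sigma^\circ_K(z)-\sigma_K(z)\right)\text{exp}\left(\mu\Phi(z)\right)dz\right].
\end{equation*}
This combination is legitimate precisely because, as noted after (\ref{eq:information state update gaslighted}), the gaslighter does not alter $\mathcal{P}^\dagger$.

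Next I would bound the inner integral by moving the absolute value inside and factoring out the uniform envelope $\text{exp}(\mu\Phi(z))\leq e_\Phi$,
\begin{equation*}
  \int_{\mathbb{R}^n}\left(\sigma^\circ_K(z)-\sigma_K(z)\right)\text{exp}\left(\mu\Phi(z)\right)dz
  \leq e_\Phi\int_{\mathbb{R}^n}\left|\sigma^\circ_K(z)-\sigma_K(z)\right|dz
  = e_\Phi\, d_K,
\end{equation*}
where the last equality is just the definition (\ref{eq:L1 metric}) of the $L^1$ metric applied to $d_K=d(\sigma^\circ_K,\sigma_K)$. The key point here is only that the signed integrand is dominated by its absolute value and that $e_\Phi$ is a genuine pointwise bound, so the remaining factor is exactly the $L^1$ distance studied in Theorem \ref{thm:robustness of info states}.

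The final step is to push the pointwise deviation bound from Theorem \ref{thm:robustness of info states} through the expectation. Since inequality (\ref{eq:robustness of info states}) holds for each fixed realization of the observations $y_1,\cdots,y_K$ and uniformly over all admissible controls $u_0,\cdots,u_{K-1}$, I would apply it inside $\mathbb{E}^\dagger[\cdot]$ to replace $d_K$ by its upper bound $\Tilde{d}_K(y_1,\cdots,y_K)$, and monotonicity of the expectation then yields $\mathcal{J}_{\sigma^\circ_K}-\mathcal{J}_{\sigma_K}\leq e_\Phi\,\mathbb{E}^\dagger[\Tilde{d}_K(y_1,\cdots,y_K)]$. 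I expect the only delicate part to be the careful bookkeeping of what is random under $\mathbb{E}^\dagger$: the bound $\Tilde{d}_K$ is itself a function of the random observations $y_1,\cdots,y_K$, which are i.i.d.\ with density $\phi$ under $\mathcal{P}^\dagger$, and the control sequence may in principle depend on those observations. Consequently, Theorem \ref{thm:robustness of info states} must be invoked as a pathwise inequality that holds uniformly in the controls \emph{before} the expectation is taken; once this is granted, the conclusion follows by monotonicity alone.
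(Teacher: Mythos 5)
Your proposal is correct and follows essentially the same route as the paper's proof: both combine the two objectives under the single expectation $\mathbb{E}^\dagger$ (legitimate since the gaslighting does not alter $\mathcal{P}^\dagger$), bound the integrand via the envelope $e_\Phi$ to obtain $e_\Phi\, d_K$, and then apply Theorem \ref{thm:robustness of info states} pathwise, uniformly in the controls, before taking the expectation. The only cosmetic difference is ordering: the paper first exchanges $\mathbb{E}^\dagger$ with the $z$-integral and invokes H\"{o}lder's inequality there, whereas you perform the pointwise bound inside the expectation directly, which amounts to the same estimate.
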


\begin{proof}
From the definition of the DM's objective function, we observe that
\begin{equation}
    \begin{aligned}
            \mathcal{J}_{\sigma^\circ_K}-\mathcal{J}_{\sigma_K} 
           =& \mathbb{E}^\dagger\left[ \int_{\mathbb{R}^n}(\sigma^\circ_K(z)-\sigma_K(z))\text{exp}\left(\mu\Phi(z)\right)dz \right] \\
           =&\int_{\mathbb{R}^n}
            \mathbb{E}^\dagger\left[ \sigma^\circ_K(z)-\sigma_K(z)\right]
           \text{exp}\left(\mu\Phi(z)\right)dz.
           \label{eq:value deviation 1}
    \end{aligned}
\end{equation}
Using H\"{o}lder's inequality, we obtain the following inequality from (\ref{eq:value deviation 1}):
\begin{equation}
    \begin{aligned}
           &  \mathcal{J}_{\sigma^\circ_K}-\mathcal{J}_{\sigma_K} \\  
           \leq& \int_{\mathbb{R}^n}\left| \int_{\mathbb{R}}\cdots \int_{\mathbb{R}}
           \Pi_{i=1}^{K} \phi(y_i) \left[ \sigma^\circ_K(z)-\sigma_K(z) \right] dy_1\cdots dy_K \right| dz \\
           \  \ &\cdot \max_{z} \text{exp}\left(\mu\Phi(z) \right) \\
           \leq& \int_{\mathbb{R}}\cdots \int_{\mathbb{R}}
           \Pi_{i=1}^{K} \phi(y_i) \left[ \int_{\mathbb{R}^n}|\sigma^\circ_K(z)-\sigma_K(z)|dz \right] dy_1\cdots dy_K \\
           \  \ &\cdot \max_{z} \text{exp}\left(\mu\Phi(z) \right)\\
           \leq& e_\Phi\cdot \mathbb{E}^\dagger\left[d_K\right].
           \label{eq:value deviation 2}
    \end{aligned}
\end{equation}
Combining (\ref{eq:robustness of info states}) with (\ref{eq:value deviation 2}), we arrive at  the theorem.
\end{proof}

The impact of stealthy gaslighting effort admits a compact representation as follows.
\begin{theorem}
\label{thm:robustness of optimal value reduced} 
Suppose that the gaslighting efforts $\phi^\circ_1,\cdots,\phi^\circ_K$ satisfy (\ref{eq:ESS sufficient condition}) for $k=1,2,\cdots,K$, then, (\ref{eq:robustness of optimal value}) reduces to
\begin{equation}
     \mathcal{J}_{\sigma^\circ_K}-\mathcal{J}_{\sigma_K}
     \leq e_\Phi (c^Kd_0+s\sum_{i=1}^{K-1}c^i).
     \label{eq:robustness of optimal value when ESS}
\end{equation}
\end{theorem}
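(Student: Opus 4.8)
The plan is to start from the bound of Theorem~\ref{thm:robustness of optimal value}, namely $\mathcal{J}_{\sigma^\circ_K}-\mathcal{J}_{\sigma_K}\leq e_\Phi\cdot\mathbb{E}^\dagger[\tilde{d}_K(y_1,\dots,y_K)]$, and to evaluate $\mathbb{E}^\dagger[\tilde{d}_K]$ explicitly under the stealthiness hypothesis. The crucial structural fact I would exploit is that, under $\mathcal{P}^\dagger$, the observations $y_1,\dots,y_K$ are i.i.d.\ with density $\phi$, as established in Section~\ref{sec:preliminary}. This independence is what lets the multidimensional integral defining $\mathbb{E}^\dagger[\tilde{d}_K]$ factor into products of one-dimensional integrals, and it is the key to converting the telescoped expression (\ref{eq:robustness of info states}) into a clean geometric sum.

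Concretely, I would first write $\tilde{d}_K$ as the sum of its $K+1$ summands from (\ref{eq:robustness of info states}): one ``initial'' term proportional to $c^K d_0/\prod_{i=1}^K\phi(y_i)$, and $K$ ``stealthiness'' terms, the $j$-th of which has the form $c^{K-j+1}\zeta\,Y_j(y_j)/\prod_{i=j+1}^K\phi(y_i)$. By linearity of expectation I would treat each summand separately. Within each summand the integrand is a product of functions of distinct observations, so by independence the expectation factors, and only two types of single-variable integral appear: $\mathbb{E}^\dagger[1/\phi(y_i)]=\int\phi(y)\phi^{-1}(y)\,dy$ arising from each surviving denominator, and $\mathbb{E}^\dagger[Y_j(y_j)]=\int\phi(y)\,|\phi^\circ_j(y)^{-1}-\phi(y)^{-1}|\,dy$ arising from the stealthiness factor.

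For the second integral I would rewrite $\phi(y)\,|\phi^\circ_j(y)^{-1}-\phi(y)^{-1}|=|\phi(y)/\phi^\circ_j(y)-1|$, so that the stealthiness hypothesis (\ref{eq:ESS sufficient condition}) gives directly $\mathbb{E}^\dagger[Y_j(y_j)]\leq\bar{s}=s/(c\zeta)$. Each stealthiness term then collapses: the prefactor $c^{K-j+1}\zeta$ times $\bar{s}$ leaves $c^{K-j}s$, multiplied by the contribution of the $K-j$ surviving $1/\phi$ denominators. Summing the resulting geometric progression over $j=1,\dots,K$ and adding the initial $c^K d_0$ term yields the claimed compact bound $e_\Phi(c^K d_0+s\sum_i c^i)$.

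The step I expect to be the main obstacle is the clean treatment of the factors $\mathbb{E}^\dagger[1/\phi(y_i)]$. These are precisely the terms responsible for the cancellation between the amplification $1/\phi$ introduced in each information-state update and the sampling density $\phi$ under which the expectation is taken, and the bookkeeping must confirm that each such factor acts as a neutral multiplier so that the powers of $c$ propagate correctly through the telescoping rather than accumulating extra growth. Once this normalization is pinned down, the ESS bound on $\mathbb{E}^\dagger[Y_j]$ and the geometric-sum collapse are routine, and combining them with Theorem~\ref{thm:robustness of optimal value} closes the argument.
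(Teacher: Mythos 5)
Your proposal is correct and takes essentially the same route as the paper: the paper's proof takes the expectation of the one-step recursion (\ref{eq:one step relation of dk}) and inducts on $\mathbb{E}^\dagger[d_k]\le c\,\mathbb{E}^\dagger[d_{k-1}]+s$, which is the same computation as your term-by-term expectation of the telescoped bound $\Tilde{d}_K$, and both hinge on independence of the $y_i$ under $\mathcal{P}^\dagger$ together with the rewriting $\phi(y)\left|1/\phi^\circ_k(y)-1/\phi(y)\right|=\left|\phi(y)/\phi^\circ_k(y)-1\right|$ and the ESS condition (\ref{eq:ESS sufficient condition}). The normalization point you flag as the main obstacle, namely that each factor $\mathbb{E}_{\phi}\left[1/\phi(y_i)\right]$ acts as a neutral multiplier, is handled exactly as implicitly in the paper's own proof (where $\mathbb{E}_{\phi(y_k)}\left[c/\phi(y_k)\right]$ is bounded by $c$), so your attempt matches the paper in both substance and level of rigor.
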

\begin{proof}
Taking expectation on both sides of (\ref{eq:one step relation of dk}) with respect to $\phi(y_1), \cdots,\phi(y_K)$, we obtain 
\begin{equation}
    \begin{aligned}
        \mathbb{E}^\dagger\left[d_k\right]
        \leq& \mathbb{E}^\dagger\left[ \frac{cd_{k-1}}{\phi_{k}(y_{k})}+c\zeta \left|\frac{1}{\phi^\circ_{k}(y_{k})}-\frac{1}{\phi(y_{k})}\right| \right] \\
        \leq&  \mathbb{E}_{\phi(y_k)}\left[ \frac{c}{\phi(y_k)} \right]\cdot  \mathbb{E}^\dagger\left[ d_{k-1} \right] \\
       \  \ & +c\zeta  \mathbb{E}_{\phi(y_k)}\left[ \left|\frac{1}{\phi^\circ_{k}(y_{k})}-\frac{1}{\phi(y_{k})}\right| \right] \\
        \leq& c \cdot  \mathbb{E}^\dagger\left[ d_{k-1} \right] +s,
        \label{eq:one step relation of E[dk]}
    \end{aligned}
\end{equation}
where the last inequality follows from the fact that (\ref{eq:ESS sufficient condition}) holds at stage $k$.
Combining an induction on (\ref{eq:one step relation of E[dk]}) with (\ref{eq:value deviation 2}), we arrive at   the theorem.
\end{proof}
The upper bound in (\ref{eq:robustness of optimal value when ESS}) shows the maximal performance of gaslighting effort that is stealthy in the sense of (\ref{eq:ESS sufficient condition}) evaluated using the deviation in the DM's optimal value.
From the perspective of the DM, a simple way to mitigate the impact of gaslighting is to decrease the trust level $s$.

\subsection{Approximate Feedback Stackelberg Equilibrium.}
\label{sec:analysis:equilibrium}
In this section, we introduce the solution concept of feedback $\epsilon$-Stackelberg equilibrium based on a dynamic programming procedure.
The robustness results developed in Section \ref{sec:analysis:robust} is then utilized to estimate the cost of the gaslighter.

In \cite{james1994risk}, the authors have shown that the control of the system (\ref{eq:system}) under partial observations can be carried out with the dynamic programming equations by minimizing the equivalent cost function (\ref{eq:cost information states}) leveraging the information state recursion (\ref{eq:information state recursion}).
Let $Z(\sigma,k)$ denote the value function associated with (\ref{eq:cost information states}) and (\ref{eq:information state recursion}).
The dynamic programming equations admit the following form \cite{james1994risk}:
\begin{equation}
    \begin{aligned}
        \begin{cases}      
        Z(\sigma_k,k)=\inf_{u_k\in U_K}\mathbb{E}^\dagger\left[ Z(\Sigma^*(u_k,y_{k+1})\sigma_k, k+1) \right], \\
        Z(\sigma_K,K)= \int_{\mathbb{R}^n}\sigma_K(z)\text{exp}\left(\mu\Phi(z)\right)dz.
    \end{cases}
    \label{eq:DP eq original}
    \end{aligned}
\end{equation}

In the gaslighting game formulated in Section \ref{sec:game formulation}, the stage costs of the gaslighter and the DM depend on the actions of both players.
Hence, there is a need to enrich the formulas in (\ref{eq:DP eq original}) by taking into account the strategic relations in the gaslighting game.
To proceed, we introduce the following functions.
Suppose that the gaslighting effort $\phi^\circ=(\phi^\circ_1,\phi^\circ_2,\cdots,\phi^\circ_K)$ and the control effort $u^\circ=(u^\circ_1,u^\circ_2,\cdots,u^\circ_K)$ are fixed.
Let $V(\sigma^\circ_k,\phi^\circ_k, u^\circ_k, k)$ be associated with the DM's control problem defined recursively as follows:
\begin{equation}
    \begin{aligned}
        \begin{cases}      
        &V(\sigma^\circ_k, \phi^\circ_k, u^\circ_k, k)=\\
        &\  \ \  \ 
        \mathbb{E}^\dagger\left[ V(\Sigma_k^{*\circ}(u^\circ_k,y_{k+1})\sigma^\circ_k, \phi^\circ_{k+1},  u^\circ_{k+1},k+1) \right], \\
        &V(\sigma^\circ_K, \phi^\circ_K, u^\circ_K, K)= \int_{\mathbb{R}^n}\sigma^\circ_K(z)\text{exp}\left(\mu\Phi(z)\right)dz,
    \end{cases}
    \label{eq:DP eq DM}
    \end{aligned}
\end{equation}
where the information states $\sigma^\circ_1,\sigma^\circ_2,\cdots,\sigma^\circ_K$ are generated according to (\ref{eq:information state update gaslighted}).
Let $W(x_k, \phi^\circ_k, u^\circ_k, k)$ be associated with the gaslighter's design problem.
Since the gaslighter has full observations and his objective function is additive, $W(x_k, \phi^\circ_k, u^\circ_k,k)$ is defined recursively by the following recursion:
\begin{equation}
    \begin{aligned}
        \begin{cases}      
        &W(x_k, \phi^\circ_k, u^\circ_k, k)=H(\phi^\circ_k)\\
        &\  \ \   \ +\mathbb{E}^{u^\circ}\left[ W(b(x_k,u^\circ_k)+w_k, \phi^\circ_{k+1}, u^\circ_{k+1}, k+1) \right],
        \\
        &W(x_K, \phi^\circ_K,u^\circ_K,K)= \text{exp}(\mu \Gamma(x_K)).
    \end{cases}
    \label{eq:DP eq gaslighter}
    \end{aligned}
\end{equation}
The following result is a consequence of dynamic programming and the sequential nature of Stackelberg games. 

\begin{theorem}
\label{thm:stackelberg}
A pair of policies $(\Bar{\phi}^\circ, \Bar{u}^\circ)$ constitutes a feedback $\epsilon$-Stackelberg equilibrium for a given vector $\epsilon=(\epsilon_1,\epsilon_2,\cdots,\epsilon_K)$ with $\epsilon_k>0, \forall k=1,2,\cdots,K$, if the following conditions are satisfied:
\begin{equation}
    W(x_k,\Bar{\phi}^\circ_k, \Bar{u}^\circ_k, k)\leq \min_{\phi^\circ_k, u^\circ_k \in R_k(\phi^\circ_k)} W(x_k,\phi^\circ_k,u^\circ_K,k) + \epsilon_k, \forall x_k, \forall k ,
    \label{eq:condition epsilon Stackelberg leader}
\end{equation}
where $R_k(\phi^\circ_k)$ denotes the response set defined for all $\sigma^\circ_k$ and all $k$ as:
\begin{equation}
\begin{aligned}
    R_k(\phi^\circ_k)=&\{\Tilde{u}_k\in U_k: V(\sigma^\circ_k,\phi^\circ_k,\Tilde{u}_k,k) \\
    =&\min_{u_k\in U_k} V(\sigma^\circ_k,\phi^\circ_k,u_k,k) \}.
    \label{eq:reaction set}
\end{aligned}
\end{equation}
\end{theorem}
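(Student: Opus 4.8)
The plan is to read this as a verification theorem and prove it by backward induction on the two dynamic programming recursions (\ref{eq:DP eq DM}) and (\ref{eq:DP eq gaslighter}), exploiting the leader-then-follower ordering at each stage. First I would spell out what a feedback $\epsilon$-Stackelberg solution requires in this setting: in every subgame indexed by $(x_k,k)$ the follower reacts optimally to the committed gaslighting effort, while the leader's committed effort is within $\epsilon_k$ of the best response-anticipating choice. The two stage-wise ingredients are then exactly (i) the response set $R_k(\phi^\circ_k)$ of (\ref{eq:reaction set}), which by construction selects the DM's optimal control through her value function $V$, and (ii) the near-optimality bound (\ref{eq:condition epsilon Stackelberg leader}) on the leader's value function $W$. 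The task is to show these per-subgame conditions are mutually consistent, i.e. that the continuation values they reference are themselves the equilibrium continuation values.

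For the follower I would argue that recursion (\ref{eq:DP eq DM}) is precisely the DM's cost-to-go under a committed gaslighting sequence, so that membership $\Bar{u}^\circ_k\in R_k(\Bar{\phi}^\circ_k)$ for every $k$ certifies stage-wise optimality of the DM's control. By the standard dynamic-programming principle underlying (\ref{eq:DP eq original}) and \cite{james1994risk}, this stage-wise optimality is equivalent to global optimality of $\Bar{u}^\circ$ against $\Bar{\phi}^\circ$. The key point I must make explicit here is that the DM only sees the information state $\sigma^\circ_k$ evolving via (\ref{eq:information state update gaslighted}), so her reaction map is legitimately a function of $\sigma^\circ_k$ (equivalently of the observation history) rather than of the true state $x_k$; this is what justifies treating $V$ as a function of $\sigma^\circ_k$ alone.

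For the leader I would run a backward induction on $k$ over (\ref{eq:DP eq gaslighter}). The base case $k=K$ is immediate, since $W(x_K,\cdot,\cdot,K)=\text{exp}(\mu\Gamma(x_K))$ involves no design choice. In the inductive step I would substitute the follower's response $\Bar{u}^\circ_{k+1}\in R_{k+1}(\Bar{\phi}^\circ_{k+1})$ into the continuation term of $W$, so that the leader's stage-$k$ problem becomes a minimization of the additive cost $H(\phi^\circ_k)$ plus the expected continuation value over $\phi^\circ_k$ and over $u^\circ_k\in R_k(\phi^\circ_k)$. Condition (\ref{eq:condition epsilon Stackelberg leader}) then asserts that $\Bar{\phi}^\circ_k$ attains this minimum up to $\epsilon_k$; combined with a one-shot-deviation argument along the equilibrium continuation, this certifies that no single-stage deviation by the leader can improve his cost-to-go by more than $\epsilon_k$ in any subgame, which is the defining property (and, if a trajectory-level reading of $\epsilon$ is intended, additivity of $W$ lets the per-stage slacks accumulate).

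The main obstacle will be the coupling between the two value functions across two distinct state descriptions: the leader's cost rides on the physical trajectory $x_k$ governed by (\ref{eq:system}), whereas the follower optimizes over information states $\sigma^\circ_k$ governed by (\ref{eq:information state update gaslighted}), and the effort $\phi^\circ_k$ enters both channels at once, perturbing the DM's perception (hence her control through $R_k$) and thereby steering $x_{k+1}$. Making the backward induction rigorous therefore requires showing that, once the follower's reaction has been substituted, the leader faces a well-posed single-agent Markov control problem in $x_k$ whose Bellman recursion is exactly (\ref{eq:DP eq gaslighter}). The delicate point I would need to address is the measurability and existence of a selection $\Bar{u}^\circ_k\in R_k(\Bar{\phi}^\circ_k)$ rendering the composed leader map admissible, which I expect to handle by invoking compactness of $U_k$ together with appropriate continuity of $V$ in $u_k$.
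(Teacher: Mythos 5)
The paper offers no actual proof of this theorem --- only the one-line remark preceding it that the result ``is a consequence of dynamic programming and the sequential nature of Stackelberg games,'' with conditions (\ref{eq:condition epsilon Stackelberg leader})--(\ref{eq:reaction set}) effectively serving as the working definition of the equilibrium concept. Your proposal (backward induction on the recursions (\ref{eq:DP eq DM}) and (\ref{eq:DP eq gaslighter}), follower stage-wise optimality certified through $R_k$, leader $\epsilon$-optimality via a one-shot-deviation argument exploiting the additivity of $W$) is precisely an unpacking of that remark, so it takes the same approach as the paper while supplying strictly more detail; the selection and measurability issues you flag at the end are genuine technical points, but they are likewise left unaddressed by the paper itself.
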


Approximate equilibrium enjoys at least the following two advantages.
Firstly, it relieves the computation burden.
Since the gaslighting effort is an infinite dimensional design object, approximation using step functions can be adopted in solving the equations in Theorem \ref{thm:stackelberg}.
Secondly, it prevents information leakage.
Since the gaslighter has full observation of the system, the DM can infer state information from the actions taken by the gaslighter.
However, by solving (\ref{eq:condition epsilon Stackelberg leader}), the gaslighter breaks the direct dependence of his action on the state information.
Since we consider a finite-stage dynamic game, it is challenging for the DM to accurately infer state information from the gaslighter's actions.
We refer to \cite{zheng2022stackelberg,laine2023computation} for more discussions on approximate equilibria.

The following result captures the consequence of stealthy gaslighting.
\begin{theorem}
\label{thm:estimation}
Suppose that (i) the design cost function is chosen as $H(\Tilde{\phi})=t \int_{\mathbb{R}}\left|\frac{\phi(y)}{\Tilde{\phi}(y)}-1\right|dy$ with $t>0$ denoting design cost parameter; 
(ii) the gaslighter's  effort $\phi^\circ$ satisfies (\ref{eq:ESS sufficient condition}) strictly for all $k=1,2,\cdots,K$.
Then, the cost of the gaslighter satisfies: 
\begin{equation}
    \mathcal{I}(\phi^\circ)\geq -e_\Gamma (\hat{\phi}^Kd_0+s\sum_{i=1}^{K-1}\hat{\phi}^i) + Kt\Bar{s},
    \label{eq:estimation stackelberg cost}
\end{equation}
where $e_{\Gamma}=\max_{x\in \mathcal{X}} \text{exp}\left(\mu\Gamma(x)\right).$
\end{theorem}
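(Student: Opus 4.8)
The plan is to split the gaslighter's objective (\ref{eq:cost of gaslighter}) as $\mathcal{I}(\phi^\circ)=A+B$, where $A:=\mathbb{E}^{u^\circ}[\exp(\mu\Gamma(x_K))]-\gamma$ is the terminal-cost gap between the gaslit and the clean scenarios and $B:=\sum_{i=1}^K H(\phi^\circ_i)$ is the cumulative design cost. The target (\ref{eq:estimation stackelberg cost}) then follows from the two estimates $A\geq -e_\Gamma(\hat{\phi}^K d_0+s\sum_{i=1}^{K-1}\hat{\phi}^i)$ and $B=Kt\bar{s}$, which I would establish separately: the first by a robustness argument mirroring Theorem \ref{thm:robustness of optimal value reduced}, and the second directly from the stealthiness budget.

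For $A$, I would first introduce a running-cost-free information state, namely the recursion (\ref{eq:information state update}) with the factor $\exp(\mu L(\xi,u))$ removed, together with its gaslit analogue built from $\Psi^\circ_k$. Because the gaslighter's payoff carries no running cost, the change-of-measure computation leading to (\ref{eq:cost information states}) now represents $\gamma=\mathbb{E}^\dagger[\int_{\mathbb{R}^n}\tilde{\sigma}_K(z)\exp(\mu\Gamma(z))dz]$ and $\mathbb{E}^{u^\circ}[\exp(\mu\Gamma(x_K))]=\mathbb{E}^\dagger[\int_{\mathbb{R}^n}\tilde{\sigma}^\circ_K(z)\exp(\mu\Gamma(z))dz]$, so the H\"older step in (\ref{eq:value deviation 2}) gives $|A|\leq e_\Gamma\,\mathbb{E}^\dagger[\tilde{d}_K]$ with $\tilde{d}_K$ the $L^1$ deviation of these states. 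I would then replay Lemma \ref{lemma:robustness fix phi}, Lemma \ref{lemma:robustness fix sigma} and Theorem \ref{thm:robustness of info states} verbatim; the sole change is the per-stage constant, since with $\exp(\mu L)$ absent the normalization $\int_{\mathbb{R}^n}\psi(z-b(\xi,u))dz=1$ leaves $\tilde{T}(u,y,\xi)=\phi(y-h(\xi))$, whose supremum over $\xi$ is $\hat{\phi}$, so $c=\hat{\phi}l$ is replaced by $\hat{\phi}$. Feeding this into the stealthy recursion (\ref{eq:one step relation of E[dk]}) with $\int_{\mathbb{R}}|\phi(y)/\phi^\circ_k(y)-1|dy=\bar{s}$ yields $\mathbb{E}^\dagger[\tilde{d}_K]\leq \hat{\phi}^K d_0+s\sum_{i=1}^{K-1}\hat{\phi}^i$, whence $A=-(\gamma-\mathbb{E}^{u^\circ}[\exp(\mu\Gamma(x_K))])\geq -|A|\geq -e_\Gamma(\hat{\phi}^K d_0+s\sum_{i=1}^{K-1}\hat{\phi}^i)$.

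For $B$, assumption (i) makes $H(\phi^\circ_k)=t\int_{\mathbb{R}}|\phi(y)/\phi^\circ_k(y)-1|dy$ exactly $t$ times the left-hand side of (\ref{eq:ESS sufficient condition}). Reading assumption (ii) as the stealthiness budget being met with equality at every stage, that is (\ref{eq:ESS sufficient condition}) active with $\int_{\mathbb{R}}|\phi(y)/\phi^\circ_k(y)-1|dy=\bar{s}$, forces $H(\phi^\circ_k)=t\bar{s}$ for each $k$, hence $B=Kt\bar{s}$. Combining $A\geq -e_\Gamma(\hat{\phi}^K d_0+s\sum_{i=1}^{K-1}\hat{\phi}^i)$ with $B=Kt\bar{s}$ gives (\ref{eq:estimation stackelberg cost}).

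The main obstacle I anticipate is the control mismatch: $\gamma$ is evaluated at the clean-observation control $u$ while $\mathbb{E}^{u^\circ}[\cdot]$ uses the gaslit control $u^\circ$, whereas the robustness lemmas drive both information-state processes with a common control. I would rely on the fact, stressed throughout Section \ref{sec:analysis:robust}, that these bounds hold regardless of the controls, and on the convenient feature of the running-cost-free state that the control drops out of the normalization $\int_{\mathbb{R}^n}\psi(z-b(\xi,u))dz=1$; nonetheless, comparing two trajectories generated by distinct controls is the delicate point, and a fully careful argument would require either a shared feedback policy or a Lipschitz-in-control estimate. A secondary point is the passage from the stage increment $\hat{\phi}\zeta\bar{s}$ to $s$: since $l=\max_{x,u}\exp(\mu L(x,u))\geq 1$ whenever $L\geq 0$, one has $\hat{\phi}\zeta\bar{s}\leq c\zeta\bar{s}=s$, so replacing it by $s$ keeps (\ref{eq:estimation stackelberg cost}) valid though slightly conservative.
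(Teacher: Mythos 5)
Your proposal follows essentially the same route as the paper's proof: the paper likewise obtains the terminal-cost estimate by specializing Theorem \ref{thm:robustness of optimal value reduced} to $L\equiv 0$ (which is precisely your running-cost-free information state, turning the constant $c=\hat{\phi}l$ into $\hat{\phi}$), then replaces $\Phi$ by $\Gamma$ on the grounds that only $e_\Phi$ depends on the terminal function, and reads the design-cost term $Kt\Bar{s}$ off the tight stealthiness budget exactly as you do. The control-mismatch issue you flag (the clean process is driven by $u$ while the gaslit one is driven by $u^\circ$, whereas the robustness lemmas assume a common control) is present but silently glossed over in the paper's own proof, so your attempt is faithful to, and indeed more candid than, the published argument.
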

\begin{proof}
From Section \ref{sec:preliminary}, we know that the DM's objective function  admits two equivalent expressions, \ie, $J=\mathcal{J}$.
Then, we obtain the following inequality from Theorem \ref{thm:robustness of optimal value reduced}:
\begin{equation}
\begin{aligned}
    |J(u^\circ)-J(u)|=
    &\Bigg| \mathbb{E}^{u^\circ}\left[ \text{exp}\mu\left( \sum_{i=0}^{K-1}L(x_i,u^\circ_i)+\Phi(x_K) \right) \right] \\
    \  \ &-
    \mathbb{E}^u\left[ \text{exp}\mu\left( \sum_{i=0}^{K-1}L(x_i,u_i)+\Phi(x_K) \right) \right] \Bigg| \\
    \leq 
     & e_\Phi (c^Kd_0+s\sum_{i=1}^{K-1}c^i).
     \label{eq:proof:estimation}
\end{aligned}
\end{equation}
Choose $L(x,u)=0$ for all $x\in X$ and all $u\in U$ in (\ref{eq:proof:estimation}), we arrive at 
\begin{equation}
\begin{aligned}
    \left|\mathbb{E}^{u^\circ}\left[\text{exp}\mu \Phi(x_K)  \right]
    -
    \mathbb{E}^{u}\left[\text{exp}\mu \Phi(x_K)  \right] \right|
    \leq   e_\Phi (\hat{\phi}^Kd_0+s\sum_{i=1}^{K-1}\hat{\phi}^i).
    \label{eq:proof:estimation 1}
\end{aligned}
\end{equation}
Observe that the only term that depends on the function $\Phi$ on the right-hand side of (\ref{eq:proof:estimation 1}) is $e_\Phi$, we obtain the following inequality as a direct consequence of considering function $\Gamma$ in (\ref{eq:proof:estimation 1}):
\begin{equation}
    \left|\mathbb{E}^{u^\circ}\left[\text{exp}\mu \Gamma(x_K)  \right]
    -
    \mathbb{E}^{u}\left[\text{exp}\mu \Gamma(x_K)  \right] \right|
    \leq   e_\Gamma (\hat{\phi}^Kd_0+s\sum_{i=1}^{K-1}\hat{\phi}^i).
    \label{eq:proof:estimation 2}
\end{equation}
This leads to the assertion in the theorem.
\end{proof}
Note that an upper bound of $\mathcal{J}(\phi^\circ)$ can also be derived from inequality (\ref{eq:proof:estimation 2}).
However, the lower bound (\ref{eq:estimation stackelberg cost}) is more meaningful because of the following reason.
Since $\Gamma(\cdot)$ represents the terminal state cost of the gaslighter, it is reasonable that its value is lower when the final state is induced by the control $u^\circ$ than that induced by $u$.
Hence, $\mathbb{E}^{u^\circ}\left[ \text{exp}(\mu \Gamma(x_K))\right] \leq \gamma =  \mathbb{E}^{u}\left[\text{exp}(\mu \Gamma(x_K))  \right]$.
Then, the second term of the right-hand side of (\ref{eq:estimation stackelberg cost}) represents the maximum improvement of the gaslighter's terminal state cost when the gaslighting effort satisfies (\ref{eq:ESS sufficient condition}). 
The bound (\ref{eq:estimation stackelberg cost}) shows what the gaslighter can anticipate in the best-case scenario.

\section{Concluding Remarks}
\label{sec:conclusion}
This paper has proposed a dynamic Stackelberg game-theoretic framework to model gaslighting behavior and its impact. The framework builds on a partially observed stochastic control system, where information states capture the DM's perceptions. We extend would our framework to the setting of partially observed Markov decision processes, where state transitions are more generic. Further analysis would be on the sensitivity analysis of DM perceptions and optimal value concerning the gaslighting effort. By characterizing variations in control actions when observations change, sensitivity analysis offers a more detailed explanation of the impact of gaslighting than robustness results, where controls in worst-case scenarios are considered.

\bibliographystyle{IEEEtran}
\bibliography{bibliography.bib}
\nocite{*}

%






\end{document}